\newtheorem{Theorem}{Theorem}
\newtheorem{Corollary}[Theorem]{Corollary}
\newtheorem{Conjecture}[Theorem]{Conjecture}
\theoremstyle{definition}
\begin{document}
\title{Elementary proof of Rayleigh formula for graphs  \\[20pt]
SVO\v{C} 2007  \\[10pt]}
\author{Josef Cibulka\thanks{Department of Applied Mathematics, Charles University, Malostransk\'e n\'am.~25, 118~00~Praha~1, Czech Republic. Email: \texttt{cibulka@kam.mff.cuni.cz}}
 \and
       Jan Hladk\'y  \thanks{Department of Applied Mathematics, Charles University, Malostransk\'e n\'am.~25, 118~00~Praha~1, Czech Republic. Email: \texttt{hladky@kam.mff.cuni.cz}}}
\date{}
\maketitle
\begin{abstract}
The Rayleigh monotonicity is a principle from the theory of electrical networks.
Its combinatorial interpretation says for each two edges of a graph $G$, that 
the presence of one of them in a random spanning tree of $G$ is negatively correlated 
with the presence of the other edge.
In this paper
we give a self-contained (inductive) proof of Rayleigh monotonicity for graphs.
\end{abstract}

Rayleigh monotonicity refers to an intuitive principle in the
theory of electrical networks: the total resistance between two nodes in
the network does not decrease when we increase the resistance of
one branch. 

We will refer to a \emph{graph} for what is sometimes called a multigraph
in literature, i.e., two vertices may be connected with several edges.
When we speak about a subgraph of a graph, we refer only to its edge set;
the subgraph is always spanning.

The network can be viewed as a graph whose vertices are nodes 
and edges are the branches of the network. The graph is weighted, each 
edge has weight equal to the reciprocal of the resistance of the respective branch.

%
Let $G=(V,E,w)$ be a graph with weighted edges where $w:E\rightarrow
\mathbb{R}^+$ is its weight function. For $I\subseteq E$ we define the
weight of $I$ by $w(I)=\prod_{e\in I}w(e)$ and for a family $\mathcal{F}$
of sets of edges, $\mathcal{F}\subseteq 2^E$, we define its weight
$\|\mathcal{F}\|=\sum_{I\in \mathcal{F}}w(I)$.

We will use $\mathcal{T}_{e_1,e_2}$ to denote the family of spanning trees of $G$ that 
contain edges $e_1$ and $e_2$. Similarly,
$\mathcal{T}_{e_1,\overline{e_2}}$, $\mathcal{T}_{\overline{e_1},e_2}$ and
$\mathcal{T}_{\overline{e_1},\overline{e_2}}$ denote the families of
spanning trees containing the edges without a bar and not containing the
edges with a bar.

In 1847, Kirchhoff showed \cite{Kirch} that the resistance between the end-vertices of an edge $e_1$
of the network is equal to
$
\frac{1}{w(e_1)}\|\mathcal{T}_{e_1}\| / \|\mathcal{T}\| ,
$
where $\mathcal{T}$ is the family of all spanning trees of $G$ and $\mathcal{T}_{e_1}$ 
is the family of spanning trees containing $e_1$.
Rayleigh monotonicity principle implies that contracting an edge $e_2$ 
does not increase the resistance between the end-vertices of $e_1$. Therefore
\[
\frac{\|\mathcal{T}_{e_1}\|}{\|\mathcal{T}\|} \geq \frac{\|\mathcal{T}_{e_1,e_2}\|}{\|\mathcal{T}_{e_2}\|},
\]
which is equivalent to Theorem \ref{ThmRayleightMonotonity}. Recently, Youngbin Choe \cite{Choe} found a 
combinatorial proof of the theorem; the proof uses Jacobi Identity and All Minors Matrix-Tree Theorem.
In this paper we give a self-contained combinatorial proof.


\begin{Theorem}\label{ThmRayleightMonotonity}
\begin{equation}\label{eqRayleight}
\|\mathcal{T}_{e_1,\overline{e_2}}\| \|\mathcal{T}_{\overline{e_1},e_2}\|
\geq \|\mathcal{T}_{e_1,e_2}\|
\|\mathcal{T}_{\overline{e_1},\overline{e_2}}\|
\end{equation}
\end{Theorem}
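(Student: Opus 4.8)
The plan is to prove \eqref{eqRayleight} by induction on the number of edges of $G$, using deletion and contraction of an auxiliary third edge. If $|E|=2$ the inequality is immediate (one of the two sides vanishes, or both sides agree), and any loop other than $e_1,e_2$ may be discarded since it lies in no spanning tree; so assume $|E|>2$ and fix a non-loop edge $f\in E\setminus\{e_1,e_2\}$. Splitting spanning trees according to whether they contain $f$ gives, for each of the four families, the deletion--contraction recursion
\[
\|\mathcal{T}_{\bullet}(G)\|=w(f)\,\|\mathcal{T}_{\bullet}(G/f)\|+\|\mathcal{T}_{\bullet}(G\setminus f)\|,
\]
where $\bullet$ ranges over the patterns $e_1\overline{e_2}$, $\overline{e_1}e_2$, $e_1e_2$, $\overline{e_1}\,\overline{e_2}$: the trees through $f$ biject with the trees of $G/f$ (each picking up a factor $w(f)$), and the trees avoiding $f$ biject with the trees of $G\setminus f$, with the membership of $e_1,e_2$ preserved on both sides.

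Writing $t=w(f)$ and substituting this recursion into the difference $\|\mathcal{T}_{e_1,\overline{e_2}}\|\,\|\mathcal{T}_{\overline{e_1},e_2}\|-\|\mathcal{T}_{e_1,e_2}\|\,\|\mathcal{T}_{\overline{e_1},\overline{e_2}}\|$ and expanding in powers of $t$, the coefficient of $t^2$ is the Rayleigh difference for $G/f$ and the coefficient of $t^0$ is the Rayleigh difference for $G\setminus f$; both are nonnegative by the induction hypothesis. Everything therefore reduces to showing that the coefficient of $t^1$ is also nonnegative, that is
\begin{align*}
&\|\mathcal{T}_{e_1,\overline{e_2}}(G/f)\|\,\|\mathcal{T}_{\overline{e_1},e_2}(G\setminus f)\|+\|\mathcal{T}_{e_1,\overline{e_2}}(G\setminus f)\|\,\|\mathcal{T}_{\overline{e_1},e_2}(G/f)\|\\
&\qquad\geq\|\mathcal{T}_{e_1,e_2}(G/f)\|\,\|\mathcal{T}_{\overline{e_1},\overline{e_2}}(G\setminus f)\|+\|\mathcal{T}_{e_1,e_2}(G\setminus f)\|\,\|\mathcal{T}_{\overline{e_1},\overline{e_2}}(G/f)\|.
\end{align*}

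This cross term is the real content of the theorem: it does \emph{not} follow from the two inductive inequalities by arithmetic alone, since one can pick nonnegative numbers with $a_1b_1\ge c_1d_1$ and $a_0b_0\ge c_0d_0$ yet $a_1b_0+a_0b_1<c_1d_0+c_0d_1$, so the combinatorial meaning of the families must enter. Multiplying through by $t$ and lifting the trees of $G/f$ back to spanning trees of $G$ containing $f$, the inequality becomes a comparison of \emph{pairs} $(P,Q)$ of spanning trees of $G$ with $f\in P$ and $f\notin Q$, weighted by $w(P)w(Q)$: the left side collects the pairs in which $e_1,e_2$ are \emph{split} (one in $P$, the other in $Q$), the right side the pairs in which they are \emph{together} (both in $P$, or both in $Q$). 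I would prove the inequality by exhibiting a weight-preserving injection from together-pairs into split-pairs. Given a together-pair with, say, $e_1,e_2\in P$ and $e_1,e_2\notin Q$, symmetric exchange supplies an edge $h\in Q\setminus P$ such that $P-e_1+h$ and $Q-h+e_1$ are again spanning trees; this moves $e_1$ from the first tree to the second, leaves $e_2$ in $P$ and $f$ on its original side (as $h\ne e_1,e_2,f$), produces a split-pair, and multiplies the weight by $w(h)/w(e_1)$ on one tree and $w(e_1)/w(h)$ on the other, so $w(P)w(Q)$ is unchanged.

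The main obstacle is upgrading this exchange from a weight-preserving \emph{map} to a genuine \emph{injection}: the edge $h$ is not unique, so I would fix a linear order on $E$, always swap the extremal admissible $h$, and then verify that the reverse swap recovers the original together-pair, which yields injectivity. The delicate points are checking that this canonical choice is consistent in both directions and handling the symmetric sub-case $e_1,e_2\in Q$, where one must ensure the exchanged edge is not $f$ itself (otherwise $f$ would switch sides); these degenerate coincidences, together with parallel classes created by contracting $f$, are exactly what must be controlled. Once the canonical exchange is shown to be well defined and invertible, the cross term is nonnegative, all three coefficients are nonnegative, and the induction closes.
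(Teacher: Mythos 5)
Your deletion--contraction setup is sound: the recursion $\|\mathcal{T}_{\bullet}(G)\|=w(f)\|\mathcal{T}_{\bullet}(G/f)\|+\|\mathcal{T}_{\bullet}(G\setminus f)\|$ is correct, and the coefficients of $t^2$ and $t^0$ in the Rayleigh difference are indeed the Rayleigh differences of $G/f$ and $G\setminus f$. But the statement on which your whole induction rests --- that the coefficient of $t^1$ (your cross term) is nonnegative --- is \emph{false}, so in particular no weight-preserving injection from together-pairs to split-pairs can exist. Take $G=K_4$ on vertices $\{1,2,3,4\}$ with all weights $1$, $e_1=12$, $e_2=34$, and $f=23$ (by symmetry every admissible choice of $f$ behaves the same way). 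Direct enumeration gives, in $G/f$: $\|\mathcal{T}_{e_1,\overline{e_2}}\|=\|\mathcal{T}_{\overline{e_1},e_2}\|=2$, $\|\mathcal{T}_{e_1,e_2}\|=1$, $\|\mathcal{T}_{\overline{e_1},\overline{e_2}}\|=3$; and in $G\setminus f$: $\|\mathcal{T}_{e_1,\overline{e_2}}\|=\|\mathcal{T}_{\overline{e_1},e_2}\|=2$, $\|\mathcal{T}_{e_1,e_2}\|=3$, $\|\mathcal{T}_{\overline{e_1},\overline{e_2}}\|=1$. Your cross term is therefore
$$2\cdot 2+2\cdot 2-1\cdot 1-3\cdot 3=-2<0.$$
(As a consistency check: the $t^2$ and $t^0$ coefficients each equal $2\cdot 2-1\cdot 3=1$, and the Rayleigh difference of $K_4$ with unit weights is $4\cdot 4-4\cdot 4=0=1-2+1$, so the polynomial $t^2-2t+1$ is nonnegative for all $t>0$ even though its middle coefficient is negative.)

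The structural reason for the failure, and the cure, are both visible in the identity the paper proves: the Rayleigh difference equals $w(e_1)w(e_2)\left(\|\mathcal{C}_+\|-\|\mathcal{C}_-\|\right)^2$, where $\mathcal{C}_{\pm}$ are the important forests (those $F$ for which both $F\cup e_1$ and $F\cup e_2$ are spanning trees), split according to whether $e_1,e_2$ are consistently oriented along the unique cycle of $F\cup e_1\cup e_2$. Writing $P=\|\mathcal{C}_+\|-\|\mathcal{C}_-\|$, deletion--contraction of $f$ gives $P=tP_{G/f}+P_{G\setminus f}$, so your cross term is exactly $2w(e_1)w(e_2)\,P_{G/f}\,P_{G\setminus f}$ --- and $P_{G/f}$, $P_{G\setminus f}$ may have opposite signs (in the example above $P_{G/f}=1$ and $P_{G\setminus f}=-1$). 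In other words, knowing only that the Rayleigh differences of the two minors are nonnegative (i.e., that $P_{G/f}^2\geq 0$ and $P_{G\setminus f}^2\geq 0$) is too weak an induction hypothesis: the Rayleigh difference is nonnegative as a polynomial in $t$ but not coefficientwise, and any deletion--contraction induction must carry the \emph{signed} quantity $P$ (or equivalent information) through the recursion. That is what the paper's proof does: it strengthens the theorem to the exact identity above and verifies it monomial by monomial, by induction on the number of vertices; your approach would be salvageable only if recast as a proof of such an identity rather than of the inequality alone.
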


\begin{proof}
Fix an orientation of $e_1$ and $e_2$. A subforest $F$ of $G$ is
\emph{important} if both $F\cup e_1$ and $F\cup e_2$ form a spanning tree
of $G$. Let $C$ be the unique cycle in $F \cup e_1 \cup e_2$. The cycle
$C$ contains both $e_1$ and $e_2$. We say that $F$ has \emph{positive
orientation} if $e_1$ and $e_2$ are consistently oriented along $C$.
Otherwise we say that $F$ has \emph{negative orientation}. Let
$\mathcal{C}_+$ and $\mathcal{C}_-$ be the set of all important forests
that have positive and negative orientation, respectively.

The statement will be proven by showing that
$$\|\mathcal{T}_{e_1,\overline{e_2}}\| \|\mathcal{T}_{\overline{e_1},e_2}\| -
\|\mathcal{T}_{e_1,e_2}\| \|\mathcal{T}_{\overline{e_1},\overline{e_2}}\|=
w(e_1)w(e_2)\left(\|\mathcal{C}_+\|-\|\mathcal{C}_-\|\right)^2,$$ or
equivalently,
\begin{equation}\label{eq_hlavni}
\|\mathcal{T}_{e_1,\overline{e_2}}\| \|\mathcal{T}_{\overline{e_1},e_2}\|
+ 2w(e_1)w(e_2)\|\mathcal{C}_+\|\|\mathcal{C}_-\|=
\|\mathcal{T}_{e_1,e_2}\| \|\mathcal{T}_{\overline{e_1},\overline{e_2}}\|+
w(e_1)w(e_2)\left(\|\mathcal{C}_+\|^2+\|\mathcal{C}_-\|^2\right).
\end{equation}

Equation (\ref{eq_hlavni}) can be viewed as an equality of two polynomials
in variables $w(e)$, $e\in E$. In order to prove it, we shall check that
the coefficient of every monomial is the same on the both sides. The
\emph{multiplicity} of edge $e$ in monomial $c \prod_{f\in E}
w(f)^{\alpha_f}$ is the number $\alpha_e$. An edge $e$ is \emph{present}
in monomial $c \prod_{f} w(f)^{\alpha_f}$ if its multiplicity is at least
one. An edge $e$ is \emph{plentiful} in monomial $c \prod_{f}
w(f)^{\alpha_f}$ if its multiplicity is at least two. The \emph{degree}
$d(v)$ of a vertex $v\in V$ is defined by $d(v)=\sum_{f \textrm{ incident
to } v} \alpha_f$. For every monomial $M=\prod_f w(f)^{\alpha_f}$ that is
contained (with nonzero coefficient) on one side of (\ref{eq_hlavni}), it
holds $\alpha_{e_1}=\alpha_{e_2}=1$, $\sum_f \alpha_f=2(|V|-1)$,
$0\leq\alpha_f\leq 2$ for every $f\in E$ and $d(v)>1$ for every $v\in V$.
Let $M$ be any such a monomial. We have to check that
\begin{equation}\label{eq_monomial}
A_{e_1:e_2}+2A_{+-}=A_{e_1 e_2:\emptyset}+A_{++}+A_{--},
\end{equation}
where
\begin{eqnarray*}
A_{e_1:e_2}&=& \# \: \{(T_1,T_2)\: | \: T_1 \in \mathcal{T}_{e_1,\overline{e_2}}, T_2\in \mathcal{T}_{\overline{e_1},e_2}, w(T_1)w(T_2)=M \} ,\\
A_{e_1 e_2:\emptyset}&=&\# \:  \{(T_1,T_2)\: | \: T_1 \in \mathcal{T}_{e_1,e_2}, T_2\in \mathcal{T}_{\overline{e_1},\overline{e_2}}, w(T_1)w(T_2)=M \},\\
A_{+-}&=&\# \:  \{(F_1,F_2)\: | \: F_1 \in \mathcal{C}_+, F_2\in \mathcal{C}_-, w(e_1)w(e_2)w(F_1)w(F_2)=M \},\\
A_{++}&=&\# \:  \{(F_1,F_2)\: | \: F_1, F_2\in \mathcal{C}_+, w(e_1)w(e_2)w(F_1)w(F_2)=M \},\\
A_{--}&=&\# \:  \{(F_1,F_2)\: | \: F_1, F_2\in \mathcal{C}_-,
w(e_1)w(e_2)w(F_1)w(F_2)=M \}.
\end{eqnarray*}

We will prove Equation (\ref{eq_monomial}) by induction on the number of
vertices of $G$. First, we should check that Equation (\ref{eq_monomial})
holds for all graphs $G$ with at most 3 vertices. This can be easily done.
(Note, that there are infinitely many graphs with at most 3 vertices since
multiedges are allowed. This is not a problem as we can without loss of
generality assume that $G$ contains only edges present in $M$.)

Assume that $|V|=n>3$ and Equation (\ref{eq_monomial}) holds for every
weighted graph $G'=(V',E',w')$, $|V'|<n$, any choice of edges $e'_1, e'_2
\in E'$ and any feasible monomial $M'$. Every time we use the induction
hypothesis, our graph $G'$ will live on a proper subset of vertices of the
graph $G$; edges $e'_1, e'_2$ will be the same as in the induction step,
i.e.,\ $e'_1=e_1, e'_2=e_2$, unless stated otherwise.

We may assume that the graph $G$ is loopless; we leave out the loops because they do
not change any of the terms in (\ref{eq_monomial}). Since $\sum_f
\alpha_f=2(|V|-1)$, there is a vertex $v$ such that $d(v)\leq 3$.
Moreover, we can choose $v$ such, that $d(v) \in \{2,3\}$ and if $d(v) = 3$ then $v$ is
incident to at most one of $e_1$ and $e_2$. We distinguish two cases.
\begin{enumerate}
\item $d(v)=2$.\\
Then either $v$ is incident to two edges (let us call them $h,i$) present in
$M$ or to one plentiful edge $h$ (then we set $i=h$). Recall that if $h$
is plentiful then $h \not = e_1, e_2$.
  \begin{enumerate}
  \item The edges $e_1,e_2$ do not coincide with $h,i$.\\
  Let $M'=M/(w(h)w(i))$, $G'=G-v$. From the induction hypothesis we know that 
  $A'_{e_1:e_2}+2A'_{+-}=A'_{e_1 e_2:\emptyset}+A'_{++}+A'_{--}$. 
  We can add arbitrary one of $h$ and $i$ to every spanning tree of $G'$ and every
  spanning tree of $G$ has at least one of $h$ and $i$. 
  Therefore $A_{e_1:e_2}=2A'_{e_1:e_2}$, $A_{e_1 e_2:\emptyset}=2A'_{e_1 e_2:\emptyset}$, 
  $A_{e_1:e_2}=2A'_{e_1:e_2}$, $A_{+-}=2A'_{+-}$, $A_{++}=2A'_{++}$, $A_{--}=2A'_{--}$ 
  and the statement follows.
  \item One of the edges $e_1,e_2$ coincides with $h,i$.\\
  Without loss of generality, let $e_1=h$. 
  Every important forest contains the edge $i$, so if $F_1$ and $F_2$ are important forests, 
  then $w(e_1)w(e_2)w(F_1)w(F_2)\not =M$. This implies that $A_{+-}=A_{++}=A_{--}=0$. The mapping
$$(T_1,T_2)\mapsto (T_1\triangle\{h,i\},T_2\triangle\{h,i\})$$
 is a bijection between partitions counted in $A_{e_1:e_2}$ and in $A_{e_1 e_2:\emptyset}$ and thus  Equation (\ref{eq_monomial}) holds.
  \item Edges $e_1$ and $e_2$ are exactly $h$ and $i$.\\
  Depending on the orientation of $e_1$ and $e_2$, one of the sets $\mathcal{C}_+$, $\mathcal{C}_-$ is empty. Assume, that $\mathcal{C}_-=\emptyset$. Then we have $A_{+-}=A_{--}=0$. There cannot exist a partition $(T_1,T_2)$ that would be counted in $A_{e_1 e_2:\emptyset}$; the edge set of $T_2$ would not span the vertex $v$. Thus $A_{e_1 e_2:\emptyset}=0$. The mapping
$$(F_1,F_2)\mapsto (F_1\cup e_1,F_2 \cup e_2)$$
is a bijection of partitions counted in $A_{++}$ and $A_{e_1:e_2}$. This
proves the statement.
  \end{enumerate}

\item $d(v)=3$.\\
Then either $v$ is incident to three edges $i,j,h$ present in $M$ or to
one edge $j$ and one plentiful edge $h$ (then we set $i=h$).
  \begin{enumerate}
  \item None of the edges $i,j,h$ coincides with $e_1,e_2$.\\
  We can write $A_{e_1 :e_2}=A_{e_1 : e_2}^{i:j,h}+A_{e_1 : e_2}^{j:i,h}+A_{e_1 : e_2}^{h:i,j}$, where $A_{e_1:e_2}^{X:Y}$ is defined for two edge sets $X$ and $Y$ as
\begin{align*}
A_{e_1:e_2}^{X:Y}
&=& \# \: \{(T_1,T_2)\: | \: T_1 \in \mathcal{T}_{e_1,\overline{e_2}}, T_2\in \mathcal{T}_{\overline{e_1},e_2}, X\subseteq T_1, Y\subseteq T_2, w(T_1)w(T_2)=M \}+\\
& &\# \: \{(T_1,T_2)\: | \: T_1 \in \mathcal{T}_{e_1,\overline{e_2}},
T_2\in \mathcal{T}_{\overline{e_1},e_2}, Y\subseteq T_1, X\subseteq T_2,
w(T_1)w(T_2)=M \}.
\end{align*}

  For the numbers $A_{e_1 e_2:\emptyset}, A_{+-}, A_{++}$ and $A_{--}$ we define $A^{X:Y}_{e_1 e_2:\emptyset}, A^{X:Y}_{+-}, A^{X:Y}_{++}$ and $A^{X:Y}_{--}$ in a~similar fashion. We shall show that
  \begin{equation} \label{eq_spojovani}
  A_{e_1:e_2}^{X:Y}+2A_{+-}^{X:Y}=A_{e_1 e_2:\emptyset}^{X:Y}+A_{++}^{X:Y}+A_{--}^{X:Y}
  \end{equation}
  for $X=\{i\}$, $Y=\{j,h\}$. Then, by symmetry, analogous equalities for $X=\{j\}$, $Y=\{i,h\}$ and $X=\{h\}$, $Y=\{i,j\}$ also hold. Summing them up, we get the statement.

  The end-vertices of $i$ and $j$ different from $v$ will be denoted by $x$ and $y$, respectively. Let $M'=\frac{w(k)}{w(i)w(j)w(h)}M$, $G'=G-v+k$, where $k=xy$ is a new edge connecting vertices $x$ and $y$ ($xy$ may be a multiedge now). We have from the induction hypothesis $A'_{e_1:e_2}+2A'_{+-}=A'_{e_1 e_2:\emptyset}+A'_{++}+A'_{--}$. It is easy to see that $A'_{e_1:e_2}=A_{e_1:e_2}^{i:j,h}$, $A'_{e_1 e_2:\emptyset}=A_{e_1 e_2:\emptyset}^{i:j,h}$, $A'_{+-}=A_{+-}^{i:j,h}$, $A'_{+-}=A_{++}^{i:j,h}$, $A'_{--}=A_{+-}^{i:j,h}$ and thus (\ref{eq_spojovani}) holds.

  \item One of the edges $e_1,e_2$ coincides with $i,j,h$.\\
  Without loss of generality, assume that $h=e_1$. Let $h=vu, i=vx, j=vy, e_2=ab$ (with
  orientation $\overrightarrow{e_2}=\overrightarrow{ba}$). Let $G'=G-v+k_1$,
  $M'=\frac{w(k_1)}{w(i)w(j)w(k)}M$, $e'_1=k_1$, $e'_2=e_2$, where $k_1=xy$,
  $G''=G-v+k_2$, $M''=\frac{w(k_2)}{w(i)w(j)w(k)}M$, $e''_1=k_2$, $e''_2=e_2$, where
  $k_2=ux$; $G'''=G-v+k_3$, $M'''=\frac{w(k_3)}{w(i)w(j)w(k)}M$, $e'''_1=k_3$,
  $e'''_2=e_2$, where $k_1=uy$. We will use induction hypothesis for polynomials $M'$,
  $M''$ and $M'''$. For the edges $e'_1$, $e''_1$ and $e'''_1$ fix orientations
  $\overrightarrow{e'_1}=\overrightarrow{xy}, \overrightarrow{e''_1}=\overrightarrow{xu},
  \overrightarrow{e'''_1}=\overrightarrow{yu}$. Refix\footnote{Refixing the orientation
  will not change the validity of the Equation \ref{eq_hlavni}, since $A_{+-}=\overline{A}_{+-},
  A_{++}=\overline{A}_{--}, A_{--}=\overline{A}_++, A_{e_1:e_2}=\overline{A}_{e_1:e_2},
  A_{e_1 e_2:\emptyset}=\overline{A}_{e_1 e_2:\emptyset}$, where the barred variables
  correspond to the situation where orientation of one edge was changed.} orientation
  of $e_1$, $\overrightarrow{e_1}=\overrightarrow{vu}$.
   Then
  \begin{align}
  A_{e_1:e_2}&=A''_{e''_1:e''_2}+A'''_{e'''_1:e'''_2}+A'_{e'_1 e'_2:\emptyset}, \label{eqSoucet1}\\
  A_{e_1 e_2:\emptyset}&=A''_{e''_1 e''_2:\emptyset}+A'''_{e'''_1 e'''_2:\emptyset}+A'_{e'_1:e'_2}. \label{eqSoucet2}
  \end{align}

  We shall prove combinatorially that
  \begin{align}\label{eqSoucet3}
   A_{++}&+A_{--}-2A_{+-} = \\
   &= A''_{++}+A''_{--}-2A''_{+-} \: + \: A'''_{++}+A'''_{--}-2A'''_{+-} \: - \: A'_{++}-A'_{--}+2A'_{+-}.\notag
  \end{align}
  In order to do so, we will view $2A_{+-}$ as
  $$2A_{+-}=\# \:  \{(F_1,F_2)\: | \: F_l \in \mathcal{C}_+, F_{3-l}\in \mathcal{C}_-, w(e_1)w(e_2)w(F_1)w(F_2)=M, l\in \{1,2\} \}$$
  (and similarly we treat with $2A'_{+-}$, $2A''_{+-}$ and $2A'''_{+-}$). Let
  $(F_1^\diamondsuit,F_2^\diamondsuit)$ be any partition that is counted in
  $A'_{++}$, $A'_{--}$, $2A'_{+-}$, \ldots, $2A'''_{+-}$. Each of $F_1^\diamondsuit$
  and $F_2^\diamondsuit$ is a spanning forest of $G-v$ such, that adding the edge
  $e_2$ creates a spanning tree of $G-v$. Vertices $a$ and $b$ must be contained in
  distinct components of $F^\diamondsuit_l$ ($l=1,2$). Moreover, no component can
  contain all the three vertices $x,y,u$.

  Take any partition $(F_1,F_2)$ that is counted in $A_{++}, A_{--}$ or $2A_{+-}$ and
  delete from it the edges $i$ and $j$, $F^\heartsuit_l=F_l-\{i,j\}$. It is immediate to
  see that $(F^\heartsuit_1,F^\heartsuit_2)$ meets the conditions described for
  $(F^\diamondsuit_1,F^\diamondsuit_2)$ also. The \emph{trace} of a partition $(E_1,E_2)$
  (which is counted in one of $A'_{++}$, $A'_{--}$, $2A'_{+-}$, \ldots, $2A'''_{+-}$ or
  $A_{++}, A_{--}, 2A_{+-}$) is defined as $\{C_1\cap\{x,y,u\},C_2\cap\{x,y,u\}\}$, where
  $C_l$ is the vertex set of a component of $E_l$ containing the vertex
  $a$. Table~\ref{table_contributions} shows contribution of any
  kind of partition to the numbers appearing in
  Equation~(\ref{eqSoucet3}).
  Equation~(\ref{eqSoucet3}) holds since
  the contributions of partitions of
  any kind are the same to the left-hand side as to the right-hand side.
  \begin{table}[ht]
  \caption{Contributions of partitions of different traces to the Equation~(\ref{eqSoucet3}).}
  \centering
  \begin{tabular}{|lll|}
  \hline \hline
  trace&left-hand side&right-hand side\\ \hline
  \multirow{2}{*}{$\{\{u\},\{u\}\}$}& \multirow{2}{*}{$\Delta A_{--}=2$}&$\Delta A''_{--}=1$\\
  & &$\Delta A'''_{--}=1$\\ \hline
  $\{\{u\},\{y\}\}$&$\Delta 2A_{+-}=1$&$\Delta 2A'''_{+-}=1$\\ \hline
  $\{\{u\},\{y,u\}\}$&$\Delta A_{--}=1$&$\Delta A''_{--}=1$\\ \hline
  $\{\{u\},\{x\}\}$&$\Delta 2A_{+-}=1$&$\Delta 2A''_{+-}=1$\\ \hline
  $\{\{u\},\{x,u\}\}$&$\Delta A_{--}=1$&$\Delta A'''_{--}=1$\\ \hline
  \multirow{2}{*}{$\{\{u\},\{x,y\}\}$}& \multirow{2}{*}{$\Delta 2A_{+-}=2$}&$\Delta 2A''_{+-}=1$\\
  & &$\Delta A'''_{+-}=1$\\ \hline
  \multirow{2}{*}{$\{\{y\},\{y\}\}$}&\multirow{2}{*}&$\Delta A'_{--}=1$\\
  &&$\Delta A'''_{++}=1$\\ \hline
  $\{\{y\},\{y,u\}\}$&$\Delta 2A_{+-}=1$&$\Delta A'_{--}=1$\\ \hline
  $\{\{y\},\{x\}\}$&$\Delta A_{++}=1$&$\Delta 2A'_{+-}=1$\\ \hline
  \multirow{2}{*}{$\{\{y\},\{x,u\}\}$}&\multirow{2}{*}&$\Delta 2A'_{+-}=1$\\
  &&$\Delta 2A'''_{+-}=1$\\ \hline
  $\{\{y\},\{x,y\}\}$&$\Delta A_{++}=1$&$\Delta A'''_{++}=1$\\ \hline
  \multirow{2}{*}{$\{\{y,u\},\{y,u\}\}$}& \multirow{2}{*}&$\Delta A'_{--}=1$\\
  &&$\Delta A''_{--}=1$\\ \hline
  \multirow{2}{*}{$\{\{y,u\},\{x\}\}$}& \multirow{2}{*}&$\Delta 2A'_{+-}=1$\\
  &&$\Delta 2A''_{+-}=1$\\ \hline
  $\{\{y,u\},\{x,u\}\}$&$\Delta A_{--}=1$&$\Delta 2A'_{+-}=1$\\ \hline
  $\{\{y,u\},\{x,y\}\}$&$\Delta 2A_{+-}=1$&$\Delta 2A''_{+-}=1$\\ \hline
  \multirow{2}{*}{$\{\{x\},\{x\}\}$}& \multirow{2}{*}&$\Delta A'_{++}=1$\\
  &&$\Delta A''_{++}=1$\\ \hline
  $\{\{x\},\{x,u\}\}$&$\Delta 2A_{+-}=1$&$\Delta A'_{++}=1$\\ \hline
  $\{\{x\},\{x,y\}\}$&$\Delta A_{++}=1$&$\Delta A''_{++}=1$\\ \hline
  \multirow{2}{*}{$\{\{x,u\},\{x,u\}\}$}& \multirow{2}{*}&$\Delta A'_{++}=1$\\
  &&$\Delta A'''_{--}=1$\\ \hline
  $\{\{x,u\},\{x,y\}\}$&$\Delta 2A_{+-}=1$&$\Delta 2A'''_{+-}=1$\\ \hline
  \multirow{2}{*}{$\{\{x,y\},\{x,y\}\}$}& \multirow{2}{*}{$\Delta A_{++}=2$}&$\Delta A''_{++}=1$\\
  &&$\Delta A'''_{++}=1$\\ \hline
  \hline
  \end{tabular}\label{table_contributions}
  \end{table}
 
  From (\ref{eqSoucet1}), (\ref{eqSoucet2}) and (\ref{eqSoucet3}) we have
  \begin{align*}
  A_{++}+A_{--}-2A_{+-}+A_{e_1 e_2:\emptyset}-A_{e_1:e_2}&=\\
  =&A''_{++}+A''_{--}-2A''_{+-}+A''_{e''_1 e''_2:\emptyset}-A''_{e''_1:e''_2}+\\
   &+A'''_{++}+A'''_{--}-2A'''_{+-}+A'''_{e'''_1 e'''_2:\emptyset} -A'''_{e'''_1:e'''_2}-\\
   &-A'_{++}-A'_{--}+2A'_{+-}-A'_{e'_1 e'_2:\emptyset}+A'_{e'_1:e'_2}=\\
  =&0
  \end{align*}
  which was to be proven.
  \end{enumerate}
\end{enumerate}
\end{proof}

Theorem~\ref{ThmRayleightMonotonity} can be reformulated as a correlation inequality for
spanning trees in a graph. Let $\mathcal{P}$ be the probability distribution of the
spanning trees in graph $G$ proportional to their weights, $\mathcal{T}$ the set of all
the spanning trees. We have
$$\mathrm{Pr}_{T\sim\mathcal{P}}[T=T_0]=\frac{\|T_0\|}{\|\mathcal{T}\|}$$
for any fixed spanning tree $T_0$.
\begin{Corollary} Let $G$ be a connected graph. For any edges $e_1$ and $e_2$, such that $e_2$ is not a bridge we have
$$\mathrm{Pr}_{T\sim\mathcal{P}}[e_1\in T \:|\: e_2\not \in T]
\geq\mathrm{Pr}_{T\sim\mathcal{P}}[e_1\in T \:|\: e_2\in T].$$
\end{Corollary}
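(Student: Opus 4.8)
The plan is to unwind both conditional probabilities into the four tree-weights appearing in Theorem~\ref{ThmRayleightMonotonity} and then reduce the desired inequality to (\ref{eqRayleight}) by a single cross-multiplication. Write $a=\|\mathcal{T}_{e_1,\overline{e_2}}\|$, $b=\|\mathcal{T}_{\overline{e_1},e_2}\|$, $c=\|\mathcal{T}_{e_1,e_2}\|$ and $d=\|\mathcal{T}_{\overline{e_1},\overline{e_2}}\|$; in this notation Theorem~\ref{ThmRayleightMonotonity} reads $ab\ge cd$.

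First I would apply the identity $\mathrm{Pr}_{T\sim\mathcal{P}}[T=T_0]=\|T_0\|/\|\mathcal{T}\|$ together with the definition of conditional probability. Summing $\|T_0\|$ over the relevant tree families, the common normalizing factor $\|\mathcal{T}\|$ cancels, yielding $\mathrm{Pr}[e_1\in T\mid e_2\notin T]=a/(a+d)$ and $\mathrm{Pr}[e_1\in T\mid e_2\in T]=c/(c+b)$. Here I have used the decompositions $\|\mathcal{T}_{\overline{e_2}}\|=a+d$ and $\|\mathcal{T}_{e_2}\|=c+b$, obtained by partitioning the spanning trees avoiding (respectively containing) $e_2$ according to whether they also contain $e_1$.

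The hypothesis that $e_2$ is not a bridge is exactly what makes the first conditional probability meaningful: it guarantees a spanning tree omitting $e_2$, hence $a+d=\|\mathcal{T}_{\overline{e_2}}\|>0$. Since $G$ is connected and $e_2$ is a genuine (non-loop) edge, $e_2$ lies in some spanning tree, so $c+b=\|\mathcal{T}_{e_2}\|>0$ as well; both denominators are therefore strictly positive.

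It then remains to verify $a/(a+d)\ge c/(c+b)$. Because both denominators are positive, I can cross-multiply to the equivalent form $a(c+b)\ge c(a+d)$, and cancelling the common term $ac$ leaves precisely $ab\ge cd$, which is the content of Theorem~\ref{ThmRayleightMonotonity}. There is essentially no obstacle in this argument; the only point demanding care is the well-definedness of the conditioning, which is exactly why the hypothesis singles out $e_2$ as a non-bridge.
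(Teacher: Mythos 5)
Your proof is correct and follows exactly the route the paper intends: the Corollary is stated as an immediate consequence of Theorem~\ref{ThmRayleightMonotonity}, obtained by writing both conditional probabilities as ratios of the four tree-weights and cross-multiplying, which is precisely your argument. Your explicit attention to the positivity of the denominators (the non-bridge hypothesis, and $e_2$ not being a loop) is a careful addition that the paper leaves unstated.
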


Let us note that a similar correlation inequality looks plausible if the spanning trees
are replaced by spanning forests. This conjecture was stated by Grimmett and Winkler in 
\cite{GrimmettWinkler} and 
is still open.
\begin{Conjecture}
Set $\mathcal{F}$ to be the set of all spanning forests of a (weighted) graph $G$,
$\mathcal{B}$ the probability distribution of the spanning forests where the probability
of each spanning forest is proportional to its weight. Let $e_1$ and $e_2$ be two
distinct edges of $G$. Then
$$\mathrm{Pr}_{F\sim\mathcal{B}}[e_1\in F \:|\: e_2\not \in F]
\geq\mathrm{Pr}_{F\sim\mathcal{B}}[e_1\in F \:|\: e_2\in F].$$
\end{Conjecture}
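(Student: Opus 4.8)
The plan is to attack the conjecture along the very route that proved Theorem~\ref{ThmRayleightMonotonity}, since the statement is the verbatim forest analogue. First I would reformulate it as a polynomial inequality. Writing $\mathcal{F}_{e_1,\overline{e_2}}$, $\mathcal{F}_{\overline{e_1},e_2}$, $\mathcal{F}_{e_1,e_2}$ and $\mathcal{F}_{\overline{e_1},\overline{e_2}}$ for the families of spanning forests that contain (or miss) $e_1$ and $e_2$ in the obvious way, the same cross-multiplication that turns Theorem~\ref{ThmRayleightMonotonity} into the Corollary shows that the conjecture is equivalent to
\begin{equation}\label{eq_forest_goal}
\|\mathcal{F}_{e_1,\overline{e_2}}\|\,\|\mathcal{F}_{\overline{e_1},e_2}\|\ \geq\ \|\mathcal{F}_{e_1,e_2}\|\,\|\mathcal{F}_{\overline{e_1},\overline{e_2}}\|,
\end{equation}
where all relevant denominators are positive because every edge of $G$ lies in, and is missed by, some spanning forest (e.g.\ the one-edge forest and the empty forest). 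The aim would then be to exhibit a manifestly nonnegative closed form for the difference of the two sides of \eqref{eq_forest_goal}, ideally a sum of squares of the shape $w(e_1)w(e_2)\left(\|\mathcal{C}_+\|-\|\mathcal{C}_-\|\right)^2$ produced in the tree case.

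Concretely, I would try to attach to each pair $(F_1,F_2)$ of spanning forests a combinatorial sign read off from the symmetric difference $D=F_1\triangle F_2$ together with $e_1$ and $e_2$. In the tree case every important configuration carried a unique cycle through $e_1$ and $e_2$, whose orientation sorted the configurations into $\mathcal{C}_+$ and $\mathcal{C}_-$; here I would instead try to extract the sign from the alternating paths and cycles of $D$ joining the endpoints of $e_1$ and $e_2$. The induction could be kept intact: choose a vertex $v$ of small degree and relate the forest counts in $G$ to those in $G-v$ and in the graphs obtained by deleting, contracting, or short-circuiting the edges at $v$, exactly as in the cases $d(v)=2$ and $d(v)=3$. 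If such a sign were well defined and compatible with these local moves, a monomial-by-monomial check analogous to \eqref{eq_monomial} and to Table~\ref{table_contributions} would close the argument.

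I expect the sign to be exactly where the plan collapses, which is presumably why the conjecture of Grimmett and Winkler remains open. Three features of forests defeat the tree bookkeeping simultaneously: the number of edges of a spanning forest is not fixed, so the monomials no longer all have total degree $2(|V|-1)$ nor multiplicities confined to $\{0,1,2\}$, and the vertex-degree induction loses its grip; the graph $F_1\cup F_2\cup e_1\cup e_2$ need not contain a single distinguished cycle through $e_1$ and $e_2$, so there is no canonical positive/negative dichotomy to square; and most fundamentally, toggling $e_1$ and $e_2$ along a path of $D$ can create a cycle, so the natural involution between the ``same side'' pairs in $\mathcal{F}_{e_1,e_2}\times\mathcal{F}_{\overline{e_1},\overline{e_2}}$ and the ``opposite side'' pairs in $\mathcal{F}_{e_1,\overline{e_2}}\times\mathcal{F}_{\overline{e_1},e_2}$ need not land inside the forests. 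In matroid terms this is the gap between bases and independent sets: spanning trees are the bases of the graphic matroid and enjoy symmetric exchange, the engine of the tree proof, whereas spanning forests are only its independent sets and carry no such exchange. One tempting shortcut, adjoining a ghost vertex joined to all of $V$ so that spanning trees of the enlarged graph encode rooted spanning forests of $G$ and then invoking Theorem~\ref{ThmRayleightMonotonity}, also fails: the induced measure weights a forest by a spurious factor $\lambda^{c(F)}\prod_{K}|K|$ over its components $K$, so it is the \emph{rooted} forest measure, not the uniform forest measure $\mathcal{B}$, that inherits negative correlation. Bridging precisely this gap is what a full proof would demand.
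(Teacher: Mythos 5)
You have not proven this statement, and you say so yourself; more importantly, no comparison with the paper's own proof is possible because the paper contains none. The statement is presented as a \emph{Conjecture}, attributed to Grimmett and Winkler \cite{GrimmettWinkler}, and the paper explicitly notes that it ``is still open.'' That said, your analysis of the obstruction is accurate on every count. The reduction to the cross-product inequality $\|\mathcal{F}_{e_1,\overline{e_2}}\|\,\|\mathcal{F}_{\overline{e_1},e_2}\| \geq \|\mathcal{F}_{e_1,e_2}\|\,\|\mathcal{F}_{\overline{e_1},\overline{e_2}}\|$ is the correct first step. The monomial bookkeeping behind Equation~(\ref{eq_monomial}) genuinely depends on every spanning tree having exactly $|V|-1$ edges: this is what pins the total degree to $2(|V|-1)$, confines multiplicities to $\{0,1,2\}$, and guarantees the low-degree vertex $v$ on which the induction pivots --- all of which evaporates when the two forests may have different numbers of edges. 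The sets $\mathcal{C}_+$ and $\mathcal{C}_-$ are defined via the unique cycle in $F\cup e_1\cup e_2$, which exists precisely because $F\cup e_1$ and $F\cup e_2$ are spanning trees; a general pair of forests offers no such canonical cycle and hence no sign to square. And your matroid-level diagnosis is the essential one: spanning trees are the bases of the graphic matroid and enjoy symmetric exchange, while spanning forests are merely its independent sets, so the exchange-based involutions have no analogue. Your remark that the apex-vertex shortcut (joining a new vertex to all of $V$ and invoking Theorem~\ref{ThmRayleightMonotonity}) yields the \emph{rooted}-forest measure rather than $\mathcal{B}$ is also correct and is the standard reason that reduction fails.

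The one thing to be clear about is the status of what you wrote: an outline that correctly identifies where it collapses is a description of the open problem, not a partial proof of the statement. The gap you name --- finding a sign, involution, or other mechanism that exhibits $\|\mathcal{F}_{e_1,\overline{e_2}}\|\,\|\mathcal{F}_{\overline{e_1},e_2}\| - \|\mathcal{F}_{e_1,e_2}\|\,\|\mathcal{F}_{\overline{e_1},\overline{e_2}}\|$ as manifestly nonnegative --- \emph{is} the conjecture, and nothing in your outline narrows it. Presented as a failure analysis explaining why the paper's inductive method does not extend from bases to independent sets, your text is sound and even instructive; presented as a proof attempt, it proves nothing, exactly as you anticipate.
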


\bigskip
 The notion of the sets $\mathcal{T}_{e_1,e_2}$,
$\mathcal{T}_{e_1,\overline{e_2}}$, $\mathcal{T}_{\overline{e_1},e_2}$ and
$\mathcal{T}_{\overline{e_1},\overline{e_2}}$ can be naturally extended to matroids. For
a matroid $\mathcal{M}=(E,I)$ with weight $w:E\rightarrow \mathbb{R}^+$ on its elements
we define
\begin{equation*}
\mathcal{T}_{e_1,\overline{e_2}}=\{T\:|\:T\in I, r(T)=r(\mathcal{M}),
e_1\in T,e_2\not\in T\}
\end{equation*}
and $\mathcal{T}_{e_1,e_2}$, $\mathcal{T}_{\overline{e_1},e_2}$ and
$\mathcal{T}_{\overline{e_1},\overline{e_2}}$ similarly. (The two definitions are
consistent for graphic matroids.) A matroid is called a \emph{Rayleigh matroid} if it
satisfies Equation~\ref{eqRayleight} for any choice of distinct elements $e_1,e_2\in E$.
Graphic matroids are a proper subclass of Rayleigh matroids. See
\cite{ChoeWagner,Wagner05} for more details.

\section*{Acknowledgement and final remarks}
The research was conducted while the authors were supported by the DIMACS-DIMATIA REU grant (NSF CNS 0138973).

The authors recently learned that Michael LaCroix and David Wagner found independently a very
similar proof.


\begin{thebibliography}{99}
  \bibitem{Choe} Y. Choe,
   A combinatorial proof of Rayleigh formula for graphs,
   {\it preprint},
   \texttt{http://com2mac.postech.ac.kr/papers/2004/04-30.pdf}.

  \bibitem{ChoeWagner} Y.B. Choe and D.G. Wagner,
  Rayleigh Matroids,
  {\it Combin. Probab. Comput.},
  15(2006), 765-781.

	\bibitem{GrimmettWinkler} G. Grimmett and S. Winkler,
  Negative association in uniform forests and connected graphs
  {\it Random Structures and Algorithms}, 
  24 (2004) 444-460.

   \bibitem{Kirch} G. Kirchhoff,
   \"Uber die Aufl\"osung der Gleichungen, auf welche man bei der Untersuchungen der linearen Vertheilung galvanischer Str\"ome gef\"uhrt wird,
   {\it Ann. Phys. Chem.},
   72 (1847), 497-508.

  \bibitem{Wagner05} D. Wagner,
  Matroid inequalities from electrical network theory,
  {\it Electron. J. Combin.},
  11(2) (2005), A1.

\end{thebibliography}
\end{document}